\documentclass{article}

\usepackage{arxiv}

\usepackage[utf8]{inputenc} % allow utf-8 input
\usepackage[T1]{fontenc}    % use 8-bit T1 fonts
\usepackage{hyperref}       % hyperlinks
\usepackage{url}            % simple URL typesetting
\usepackage{booktabs}       % professional-quality tables
\usepackage{amsfonts}       % blackboard math symbols
\usepackage{nicefrac}       % compact symbols for 1/2, etc.
\usepackage{microtype}      % microtypography
\usepackage{lipsum}
\usepackage{amsmath}

\usepackage{amsfonts,amssymb}
\usepackage{graphicx}
\usepackage{amsthm}
\usepackage[dvipsnames]{xcolor}

\newcommand{\R}{\mathbb{R}}
\newcommand{\s}{\mathbb{S}}
\newcommand{\e}{\mathbb{E}}

\newtheorem{theorem}{Theorem}[section]

\newtheorem{lemma}{Lemma}[section]
\newtheorem{corollary}{Corollary}[section]

% Igor's changes
\usepackage{cite}
\usepackage{authblk}

\def\sign{\text{sign}\,}
\def\supp{\text{supp}\,}
\newcommand{\p}{\partial}
\newcommand{\bb}{\begin{equation}}
\newcommand{\ee}{\end{equation}}
\newcommand{\ba}{\begin{array}}
\newcommand{\ea}{\end{array}}
\newcommand{\f}{\frac}
\usepackage[all]{xy}
\newcommand{\ds}{\displaystyle}

\newcommand{\al}{\alpha}

\numberwithin{equation}{section}

\title{Geometrical demonstration for persistence properties for a bi-Hamiltonian shallow water system} 

\author{Igor Leite Freire}

\affil{Centro de Matem\'atica, Computa\c{c}\~ao e Cogni\c c\~ao,\\ Universidade Federal do ABC,\\ Avenida dos Estados, $5001$, Bairro Bangu,\\
$09.210-580$, Santo Andr\'e, SP - Brasil\\
  \texttt{igor.freire@ufabc.edu.br} \\
  \texttt{igor.leite.freire@gmail.com}}

  %% \AND
  %% Coauthor \\
  %% Affiliation \\
  %% Address \\
  %% \texttt{email} \\
  %% \And
  %% Coauthor \\
  %% Affiliation \\
  %% Address \\
  %% \texttt{email} \\
  %% \And
  %% Coauthor \\
  %% Affiliation \\
  %% Address \\
  %% \texttt{email} \\

\begin{document}
\maketitle
\begin{abstract}
\centering\begin{minipage}{\dimexpr\paperwidth-9cm}
\textbf{Abstract:} 
We present a geometrical demonstration for persistence properties for a bi-Hamiltonian system modelling waves in a shallow water regime. Both periodic and non-periodic cases are considered and a key ingredient in our approach is one of the Hamiltonians of the system. As a consequence of our developments we improve recent works dealing with unique continuation properties for shallow water equations, as well as we provide a novel way to prove that the unique compactly supported solution of the system is necessarily the zero function.
\vspace{0.1cm}

\textbf{2010 AMS Mathematics Classification numbers}: 35Q35, 35Q51, 37K10.

\textbf{Keywords:} CH2 system; Conserved quantities; Unique continuation; Compactly supported solutions.

\end{minipage}
\end{abstract}
\bigskip
\newpage
\tableofcontents
\newpage

\section{Introduction}\label{sec1}

The Camassa-Holm (CH) equation, up to a Galilean boost, is given by
\begin{equation}\label{1.0.1}
u_t-u_{txx}+\al u_x+3uu_x=2u_xu_{xx}+uu_{xxx},
\end{equation}
where $u=u(t,x)$ and $\al$ is a constant. Despite being discovered by Fokas and Fuchssteiner in the early 80's in the context of integrable equations, see Fokas and Fuchssteiner \cite{fuch-fok}, Fuchssteiner \cite{fuch}, and also the comments on \cite[page 146]{fokas}, it was not spotlighted until the seminal work by Camassa and Holm \cite{chprl}, when \eqref{1.0.1} was added as a new member to the zoo of models describing waves in shallow water regimes. Their work brought the equation to light in view of its physical relevance as a shallow water model and the fact that it has quite interesting mathematical  solitonic solutions -- the (multi-)peakons --, to name a few, as well as due to the fact that it is bi-Hamiltonian (a fact known since Fokas and Fuchssteiner's works).

If we drop the right side of \eqref{1.0.1}, then we recover the BBM equation, which was proposed by Benjamin, Bona and Mahony \cite{benjamin} as an alternative to the KdV equation \cite{kdv} for describing waves in water models. However, both KdV and BBM equations do not describe wave breaking, which is a common, but mathematically challenging, phenomenon arising in waves propagating in shallow water regimes.

If we take $\al=0$ in \eqref{1.0.1}, then we can rewrite it in its most common form 
\begin{equation}\label{1.0.2}
m_t+2u_xm+um_x=0,
\end{equation}
where $m:=u-u_{xx}$ is known as momentum \cite{chprl}. The blow up of its solutions, manifested through a singularity in its first order spatial derivative (wave breaking) was proved by Constantin and Escher \cite{const1998-1,const1998-2} and \cite{const1998-3} for the non-periodic and periodic cases, respectively. See also \cite{const2000-1}. Moreover, in \cite{constjmp,henry1} it was shown that the equation does not have non-zero compactly supported solutions for any $t$, except eventually at $t=0$ (corresponding to the initial data).

The CH equation is integrable in several different senses, such as: it is bi-Hamiltonian and possesses a Lax pair \cite{chprl,chh}; its solutions provide metrics for a two-dimensional Riemannian manifold with constant and negative Gaussian curvature \cite{reyes2002,keti2015,raspajde,raspaspam} (pseudo-spherical equations) and is geometrically integrable as well  \cite{reyes2002}. For further details on integrable or pseudo-spherical equations, see \cite{olverbook} or \cite{chern,sa}, respectively.

More recently, Linares and Ponce \cite{linares} proved a unique continuation result for the solutions of the CH equation, which can be summarised as: if there exists an open set in which $u$ is defined and vanishes there, then the solution vanishes everywhere. Shortly after, their ideas were retaken by the author \cite{freire-jpa}, but now with focus on the the Dulling-Gottwald-Holm (DGH) equation (that has the CH equation \eqref{1.0.2} as a particular member). 

The approach in \cite{freire-jpa} used some of the ideas introduced in \cite{linares}, but differently from that reference the conserved quantities of the equation were utilised and brought a different framework in the study of persistence properties of the DGH and CH equations: they were used to give a novel demonstration for unique continuation properties of the solutions of \eqref{1.0.2} and went further, exploring some consequences, such as the non-existence of compactly supported solutions of the CH in Sobolev spaces. These ideas were also employed in the investigation of persistence properties of the BBM equation in \cite{raspa-mo} and the $0-$equation in \cite{raspa-0,raspa-ge}. 

It is also worth mentioning that in \cite{freire-jpa} it was shown that the ideas introduced in \cite{linares} and the use of conserved quantities are not only geometric, but also consistent with the physical derivation of the model considered. 

Since the end of the first decade of our century, some generalisations of the CH equation have been considered, such as systems involving it. Perhaps one of the most relevant and investigated is
\begin{equation}\label{1.0.3}
\left\{\ba{l}
m_t+2u_xm+um_x+\rho\rho_x=0,\\
\\
\rho_t+(u\rho)_x=0,
\ea\right.
\end{equation}
where $\rho=\rho(t,x)$ and again $m=u-u_{xx}$, $u=u(t,x)$; $t$ and $x$ denote time and space, respectively, whereas $u$ is the horizontal velocity of the fluid and $\rho$ is related to the horizontal deviation of the fluid's surface (in the Section \ref{sec5} these variables will be better discussed). This system was firstly reported by Olver and Rosenau, see \cite[Eq. (43)]{olver-pre}, and later derived from the Green-Naghdi equations \cite{g-n} as a shallow water model by Constantin and Ivanov \cite{const-ivanov}. 

System \eqref{1.0.3} is often known as CH2 or CH(2,1) system, see \cite[Sect. 3]{holm-ivanov}, and it reduces to the CH equation \eqref{1.0.2} whenever $\rho=0$. It is interesting to note that the CH2 system and the CH equation were both born in the context of integrable systems in \cite{fuch-fok,fuch} and \cite{olver-pre}, respectively, and latter shown in \cite{chprl} and \cite{const-ivanov} their physical relevance as shallow water models. Their similarities, however, are not restricted to their birth since the CH2 system shares several other properties with the CH equation, such as: 
\begin{itemize}
    \item It is bi-Hamiltonian \cite[Ex. 4, page 53]{olver-pre} and hence, integrable in this sense;
    \item It is also integrable in the sense of having a Lax-pair ({\it e.g}, see \cite[page 7130]{const-ivanov}), that is, it is the compatibility condition of the linear system
$$
\ba{lcl}
\psi_{xx}&=&\ds{\left(\zeta m-\zeta^2\rho^2+\f{1}{4}\right)\psi},\\
\\
\psi_t&=&\ds{\left(\f{1}{2\zeta}-u\right)\psi_x+\f{1}{2}u_x\psi},
\ea
$$
see \cite{holm-ivanov} for further details;
    \item It is also of pseudo-spherical type \cite{chris}, that is, the solutions of the system define one-forms that satisfy certain structure equations that implies on the existence of a two-dimensional Riemannian manifold with constant Gaussian curvature equals to $-1$;
    \item It is locally well-posed provided that $(u(0,\cdot),\rho(0,\cdot))\in H^{s}(\R)\times H^{s-1}(\R)$, for $s\geq2$, see \cite[Theorem 2.1]{henry2} and \cite{const-ivanov}; its solutions, under some conditions, develop singularities in finite time in the form of wave breaking as well as it also possesses global solutions \cite{const-ivanov}. These works are concerned with problems defined in the whole line, but there are also results about local and global existence of solutions as well as blow-up (wave breaking) for the system \eqref{1.0.3} provided that $u(0,\cdot)$ and $\rho(0,\cdot)$ are periodic in $H^{s}(\s)\times H^{s-1}(\s)$, with $s\geq2$, see \cite{liu-ima,hu-proc};
    \item It does not have non-trivial compactly supported solutions defined on the whole real line. In fact, Henry \cite{henry2} proved that if $x\mapsto u(t,x)$ is a $C^2$ function, compactly supported, and $(u,\rho)$ solves \eqref{1.0.3}, then $u$ and $\rho$ are identically zero.
    \item System \eqref{1.0.3} can be seen as a geodesic flow on the tangent bundle of suitable infinite-dimensional Lie group \cite{escher-annali}, as well as the CH equation \cite{const2000-1}.
\end{itemize}

The observations above not only show the interdisciplinary relevance of the system \eqref{1.0.3}, which {\it per se} would be enough to justify its importance, but also catch our attention to several remarkable properties shared with the CH equation and makes us conjecture if they are only the edge of an iceberg of common properties, and therefore, it is natural to ask whether the recent developments made in \cite{freire-jpa,linares} could also be somehow applied to \eqref{1.0.3}.

In line with the last paragraph, some natural questions are very natural:
\begin{itemize}
    \item[{\bf Q1}] Is it possible to prove a unique continuation result similar to those proved in \cite{linares,freire-jpa} for the system \eqref{1.0.3}?
    \item[{\bf Q2}] As a consequence of the unique continuation of solutions proved in \cite{linares,freire-jpa}, in \cite{freire-jpa} was shown that the periodic DGH equation (which includes the CH) does not have non-trivial compactly supported solutions. By a periodic and compactly supported solution we mean a periodic solution such that the support of its restriction to $[0,1)$ is compact. As far as the author knows, the following question has not been addressed so far for the CH2 system: Can the periodic solutions of \eqref{1.0.3} be compactly supported?
    
    \item[{\bf Q3}] Is it possible to conclude that the system \eqref{1.0.3} does not have non-trivial compactly supported solutions defined on suitable Banach spaces of functions defined on $\R$, similarly as proved in \cite{freire-jpa}?
\end{itemize}

The present paper is dedicated to respond these questions. In fact, we report an extension and generalisation of the results in \cite{freire-jpa} (and \cite{linares} as well).

As a consequence of our approach and results for the unique continuation of solutions of the CH2 system, we are not merely able to recover the non-existence results of compactly supported solutions of \eqref{1.0.3} proved by Henry \cite{henry2}. On the contrary, the venue paved gives us tools for investigating non-existence results of periodic compactly supported solutions of the system \eqref{1.0.3} that had not been explored previously. 

Similarly to \cite{freire-jpa}, our strategy to tackle {\bf Q1}, {\bf Q2} and {\bf Q3} is essentially geometric and physically consistent with the assumptions made to derive the system \eqref{1.0.3}. For a certain fixed time $t$ we first construct an auxiliary function that generalises the one introduced in \cite{linares} and use the non-locality of the convolution to extend to a straight line/circle local properties of the solutions. Then we use one of the Hamiltonians to extend to any other time the result proved for the fixed $t$. Our approach can be interpreted as follows, from two different viewpoints:
\begin{itemize}
    \item From a physical framework, system \eqref{1.0.3} describes the propagation of waves on the surface of a fluid. Roughly speaking, a wave is a disturbance, or perturbation, propagating on a media. If we can find a time for which the energy of a conservative system vanishes, then this implies that the phenomenon under consideration did not occur. In our case, since the system \eqref{1.0.3} describes the propagation of waves in a fluid, if we are able to find a time for which the energy of the system disappears, then it means that we did not have an initial perturbation (initial data) and, therefore, did not have the propagation of waves;
    \item From a mathematical point of view, if we can find a straight line segment (for the non-periodic problem) or an arc segment (for the periodic problem) for which the solution does not depend on $x$ or is constant, then we use the non-locality of the convolution to prove that this local behaviour is extended to the whole straight-line or circle containing the original segment/arc. Hence, we use the conservation of energy to show that this also holds to any parallel straight-line/circle of this first one, which therefore extends this property to the entire domain of the solution.
\end{itemize}

{\bf Notation and conventions.} Throughout this paper we identify the interval $[0,1)$ with the circle $\s$. We denote by $\mathbb{E}$ the real line $\R$ (non-periodic problem) or $\mathbb{S}$ (periodic case), while $L^1(\mathbb{E})$ corresponds to the space of the integrable functions on $\mathbb{E}$. The norm of a Banach space $X$ is denoted by $\|\cdot\|_X$. Given $f,\,g\in L^1(\e)$, their convolution is denoted by $f\ast g$, whereas for each $s\in\R$, the Sobolev space of order $s$ is denoted by $H^s(\mathbb{E})$ (for further details, see \cite{linaresbook,taylor}). If $u=u(t,x)$, we use $u_0(x)$ to denote the function $x\mapsto u(0,x)$. Moreover, we also define $m:=u-u_{xx}$, for $t>0$, and $m_0:=u_0-u_0''$. Let $\lfloor \cdot \rfloor$ denote the greatest integer function and we recall the Helmholtz operator $\Lambda^2=1-\p_x^2$ and its inverse $\Lambda^{-2}=(1-\p_x^2)^{-1}$, given by $\Lambda^{-2}(f)=g\ast f$, where 
\begin{align}\label{1.0.4}
g(x)=\left\{\ba{ll}
\ds{\f{e^{-|x|}}{2}},&\text{if}\,\,\e=\R,\\
\\
\ds{\frac{\cosh(x-\lfloor x \rfloor - 1/2)}{2\sinh(1/2)}}, & \text{if}\,\,\e=\mathbb{S}.
\ea
\right.
\end{align}

In the case $\e=\s$, we add the additional periodic condition for the solutions considered herein
\begin{equation}\label{1.0.5}
\ba{lcl}
u(t,x+1)&=&u(t,x), \quad (t,x)\in[0,T)\times\R,\\
\\
\rho(t,x+1)&=&\rho(t,x), \quad (t,x)\in[0,T)\times\R.
\ea
\end{equation}

Finally, very often we use the jargon {\it non-trivial} for functions/solutions. By a non-trivial function/solution, we mean a function/solution that is not identically zero.

{\bf Outline of the paper.} In the next section we present the main theorems of the paper. In Section \ref{sec3} we prove technical lemmas and theorems that will be of vital importance for the demonstrations of our main results, which are proved in Section \ref{sec4} and discussed in Section \ref{sec5}, while in Section \ref{sec6} we present the conclusions of the work.

\section{Main results}\label{sec2}

We begin by noticing that the system \eqref{1.0.3} has the following Hamiltonian \cite{olver-pre}, which gives a conserved quantity for the equation:
\begin{equation}\label{2.0.1}
{\cal H}(t)=\f{1}{2}\int_\e(u^2+u_x^2+\rho^2)dx=\f{1}{2}\|u(t,\cdot)\|^2_{H^1(\e)}+\f{1}{2}\|\rho(t,\cdot)\|^2_{L^2(\e)}.
\end{equation}

Let us enumerate some conditions that we shall use frequently during this section. We firstly assume the existence of real numbers $t_0$, $x_0$, $t_1$ and $x_1$, such that $t_0<t_1$ and $x_0<x_1$, defining a non-empty open set 
\begin{equation}\label{2.0.2}
{\cal R}=(t_0,t_1)\times(x_0,x_1)\subseteq[0,T)\times\mathbb{E},
\end{equation}
for which we shall require certain properties of the solutions of \eqref{1.0.3}. Geometrically speaking, if $\mathbb{E}=\R$ the set ${\cal R}$ is an open rectangle, while it can be identified as a piece of cylinder whenever $\mathbb{E}=\s$. Also, we assume that the solution $(u,\rho)$ is bounded and exists for some $T>0$.

The aforesaid conditions are:
\begin{enumerate}
    \item[{\bf C1}] $(u,\rho)$ is a solution of \eqref{1.0.3} such that
    $$
    \left(u^2+\f{1}{2}u_x^2+\f{1}{2}\rho^2\right)(t,\cdot)\in C^0(\R),\,\,t\in[0,T);
    $$
    \item[{\bf C2}] $(u,\rho)\big|_{\cal R}=(0,0)$, where ${\cal R}$ is given by \eqref{2.0.2};
    \item[{\bf C3}] The solution $(u,\rho)$ conserves \eqref{2.0.1}.
\end{enumerate}

We observe that the condition {\bf C1} is very mild since any solution $(u,\rho)$ of the system \eqref{1.0.3} such that $u(t,\cdot)\in C^1(\e)$ and $\rho(t,\cdot)\in C^0(\e)$ satisfies it. In particular, the solutions satisfying the existence and uniqueness conditions required in \cite{const-ivanov,henry2,liu-ima,hu-proc} fulfill such requirement. Note, however, that we do not request uniqueness of solutions. 

Condition {\bf C3} is a structural property of the system \eqref{1.0.3}. In fact, if we define
$$C^0:=\f{u^2+u_x^2+\rho^2}{2},\quad C^1:=u^3-u^2u_{xx}-uu_{tx}+u\rho^2$$
we have the identity
$$
\p_t\Big(C^0\Big)+\p_x\Big(C^1\Big)=u\Big(m_t+2u_xm+um_x+\rho\rho_x\Big)+\rho\Big( \rho_t+(u\rho)_x\Big),
$$
meaning that on the solutions of \eqref{1.0.3}, the divergence of the current $C:=(C^0,C^1)$ vanishes, that is, it is a conservation law for the equation. As such, the component $C^0$, which is called {\it conserved density}, gives us the invariant \eqref{2.0.1}, called {\it conserved quantity}. In fact, if we define $\p\e=\{-\infty,+\infty\}$, if $\e=\R$, or $\p\e=\{0,1\}$, if $\e=\s$, then
$$
\f{d}{dt}{\cal H}(t)=\f{d}{dt}\int_\e C^0dx=-\int_\e \p_x C^1dx=-C^1\Big|_{\p\e}.
$$

Therefore, as long as $C^1\Big|_{\p\e}=0$, \eqref{2.0.1} is constant along time. In particular, if $u_0\in H^s(\e)$ and $\rho_0\in H^{s-1}(\e)$, $s\geq2$, then not only we have granted the existence and uniqueness of the corresponding solution, see \cite{henry2,hu-proc,liu-ima}, but also we have {\bf C1} and {\bf C3} satisfied.

\begin{theorem}\label{teo2.1}
If $(u,\rho)\in C^0([0,T);H^s(\e)\times H^{s-1}(\e))$, $s\geq2$, is a solution of \eqref{1.0.3} satisfying {\bf C2}. Then necessarily $u=\rho=0$.
\end{theorem}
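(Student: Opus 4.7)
The plan is to combine a spatial unique continuation argument (at a fixed time) with the conservation law supplied by the Hamiltonian \eqref{2.0.1}. Since $s\geq 2$, conditions {\bf C1} and {\bf C3} are automatic, so in particular $\mathcal{H}(t)$ is constant along the flow. This reduces the problem to showing that at some single time $t^\ast$ one has $u(t^\ast,\cdot)\equiv 0$ and $\rho(t^\ast,\cdot)\equiv 0$ on the whole of $\mathbb{E}$.

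First I would fix any $t^\ast\in(t_0,t_1)$. By {\bf C2}, both $u(t^\ast,\cdot)$ and $\rho(t^\ast,\cdot)$ vanish on the open interval $(x_0,x_1)$; since $u(t^\ast,\cdot)\in H^s(\mathbb{E})$ with $s\geq 2$, the derivative $u_x(t^\ast,\cdot)$ also vanishes there, and hence so does $m(t^\ast,\cdot)=u(t^\ast,\cdot)-u_{xx}(t^\ast,\cdot)$. Rewriting the first equation of \eqref{1.0.3} by applying $\Lambda^{-2}$ expresses $u$ as a convolution of $g$ (given in \eqref{1.0.4}) with a nonlinear expression in $u$, $u_x$ and $\rho$. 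The non-locality of this convolution, together with the rapid decay (or periodicity) of $g$, is precisely what will let us propagate the vanishing of $(u,\rho)$ from the slit $(x_0,x_1)$ to the entire line/circle at time $t^\ast$; this is the substance of the technical lemmas announced in Section \ref{sec3}, generalising the auxiliary function of \cite{linares} to the two-component setting.

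Once this spatial unique continuation is established, we have $u(t^\ast,\cdot)\equiv 0$ and $\rho(t^\ast,\cdot)\equiv 0$ on $\mathbb{E}$, so from \eqref{2.0.1} it follows that $\mathcal{H}(t^\ast)=0$. By condition {\bf C3}, $\mathcal{H}$ is independent of $t$, so
\begin{equation*}
\tfrac{1}{2}\|u(t,\cdot)\|_{H^1(\mathbb{E})}^2+\tfrac{1}{2}\|\rho(t,\cdot)\|_{L^2(\mathbb{E})}^2=\mathcal{H}(t)=\mathcal{H}(t^\ast)=0
\end{equation*}
for every $t\in[0,T)$, and the conclusion $u\equiv\rho\equiv 0$ follows from the non-degeneracy of the norms.

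The main obstacle is the spatial step: to pass from vanishing on the finite window $(x_0,x_1)$ to vanishing on all of $\mathbb{E}$ at the fixed time $t^\ast$. The delicate point is that only $u$ is recovered from $m$ via $\Lambda^{-2}$, while $\rho$ is governed by a transport equation, so one must simultaneously control both components; the Hamiltonian density $u^2+u_x^2+\rho^2$ is a natural candidate for the auxiliary quantity, because it mixes $u$, $u_x$ and $\rho$ in exactly the combination preserved by the dynamics and therefore couples well with the convolution identity that transfers information outside $(x_0,x_1)$. Everything else in the argument is organisational: the time-extension is immediate from conservation, and the $H^s\times H^{s-1}$ regularity with $s\geq 2$ is enough to justify all pointwise manipulations.
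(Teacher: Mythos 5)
Your overall architecture coincides with the paper's: show that $(u,\rho)(t^\ast,\cdot)$ vanishes on all of $\e$ for a single $t^\ast\in(t_0,t_1)$, then use the conserved Hamiltonian \eqref{2.0.1} to get $\mathcal{H}\equiv 0$ and hence $u\equiv\rho\equiv 0$ for every time. The time-extension half of your argument is complete and correct. But the spatial half --- which you yourself flag as ``the main obstacle'' --- is not actually carried out: you assert that the non-locality of the convolution with $g$ ``will let us propagate the vanishing'' from $(x_0,x_1)$ to all of $\e$, without supplying the mechanism. That mechanism is the entire content of the theorem, and it is a positivity argument, not a decay argument. Concretely: the non-local form \eqref{3.0.1} gives $u_t+uu_x=-\partial_x\Lambda^{-2}f_t$, where $f_t=u^2+\frac{1}{2}u_x^2+\frac{1}{2}\rho^2\geq 0$ is a sum of squares. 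Condition {\bf C2} makes $u_t+uu_x$ vanish on the window, hence $F_{t^\ast}:=\partial_x\Lambda^{-2}f_{t^\ast}$ is identically zero (in particular constant) on $(x_0,x_1)$. For $a<b$ in that window one then writes
\begin{equation*}
0=F_{t^\ast}(b)-F_{t^\ast}(a)=\int_{\e}\bigl(g'(b-y)-g'(a-y)\bigr)f_{t^\ast}(y)\,dy,
\end{equation*}
and checks that the kernel $g'(b-y)-g'(a-y)$ is strictly positive for $y\notin[a,b]$ (Lemma \ref{lema3.4}, which needs the explicit form of $g$ separately on the line and on the circle). Since $f_{t^\ast}$ already vanishes on $(a,b)$ and is non-negative elsewhere, the vanishing of this integral forces $f_{t^\ast}\equiv 0$, i.e.\ $u(t^\ast,\cdot)=\rho(t^\ast,\cdot)\equiv 0$; only then does your conservation step take over.

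Your worry that ``only $u$ is recovered from $m$ via $\Lambda^{-2}$, while $\rho$ is governed by a transport equation, so one must simultaneously control both components'' anticipates a coupling difficulty that does not arise: $\rho$ enters the non-local term of the $u$-equation precisely through $\frac{1}{2}\rho^2$, so it is absorbed into the single non-negative density $f_t$, and the transport equation for $\rho$ is never used in the spatial step. You did correctly guess that the Hamiltonian density is the right auxiliary quantity, but without the sum-of-squares structure of $f_t$ and the sign of the kernel difference the propagation from $(x_0,x_1)$ to $\e$ does not follow; ``rapid decay (or periodicity) of $g$'' alone proves nothing here. As written, the proposal is an accurate outline with the central lemma missing.
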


Theorem \ref{teo2.1} has the following non-existence result as a straightforward consequence:

\begin{corollary}\label{cor2.1}
If $(u_0,\rho_0)\in H^s(\e)\times H^{s-1}(\e)$, $s\geq 2$, is a non-trivial initial data for \eqref{1.0.3}, then there is no open subset of $[0,T)\times\e$ for which the corresponding solution $(u,\rho)$ vanishes.
\end{corollary}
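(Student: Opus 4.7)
The strategy is to fuse the non-local form of the first equation of \eqref{1.0.3} with the conservation law guaranteed by condition \textbf{C3}. Standard manipulations using $m = u - u_{xx}$ and $\rho\rho_x = \partial_x(\rho^2/2)$ recast the first equation as
\begin{equation*}
u_t + uu_x + \partial_x \Lambda^{-2}\!\left(u^2 + \tfrac{1}{2}u_x^2 + \tfrac{1}{2}\rho^2\right) = 0,
\end{equation*}
in which the argument $E := u^2 + \tfrac{1}{2}u_x^2 + \tfrac{1}{2}\rho^2 \geq 0$ is precisely (twice) the conserved density $C^0$ appearing in the discussion of \textbf{C3}. On the open set $\mathcal{R}$ furnished by \textbf{C2}, every term on the left-hand side other than the convolution is identically zero, so for each fixed $t \in (t_0,t_1)$ the function $x \mapsto (g \ast E(t,\cdot))(x)$ is constant on the non-degenerate interval $(x_0,x_1)$.

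The next step uses the explicit shape of the Green function \eqref{1.0.4}. Consider first $\mathbb{E} = \mathbb{R}$. Because $E(t,\cdot)$ vanishes on $(x_0,x_1)$, splitting the convolution yields, for $x \in (x_0,x_1)$,
\begin{equation*}
(g \ast E(t,\cdot))(x) = \tfrac{e^{-x}}{2}A(t) + \tfrac{e^{x}}{2}B(t),
\quad
A(t) := \!\!\int_{-\infty}^{x_0}\!\!\! e^{y} E(t,y)\, dy, \quad B(t) := \!\!\int_{x_1}^{\infty}\!\!\! e^{-y} E(t,y)\, dy,
\end{equation*}
both non-negative. Vanishing of the $x$-derivative on $(x_0,x_1)$ plus the linear independence of $\{e^{-x},e^{x}\}$ forces $A(t) = B(t) = 0$; since the weights $e^{\pm y}$ are strictly positive and $E(t,\cdot)$ is continuous by \textbf{C1}, this compels $E(t,\cdot) \equiv 0$ on $(-\infty,x_0]\cup[x_1,\infty)$, and combined with \textbf{C2} on all of $\mathbb{R}$. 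In the periodic case a parallel split of the arc integral against the $\cosh$-kernel in \eqref{1.0.4} produces a linear combination of a hyperbolic sine and cosine of shifted arguments whose coefficients are again non-negative integrals of $E(t,\cdot)$ against positive weights, and the same coefficient-vanishing argument applies.

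Thus at \emph{every} $t \in (t_0,t_1)$ one has $u(t,\cdot) \equiv 0$ and $\rho(t,\cdot) \equiv 0$ on $\mathbb{E}$, so $\mathcal{H}(t) = 0$. Since the hypothesis $(u,\rho) \in C^0([0,T);H^s\times H^{s-1})$ with $s \geq 2$ guarantees \textbf{C3}, the functional $\mathcal{H}$ is constant along time and therefore identically zero on $[0,T)$. The expression \eqref{2.0.1} then forces $\|u(t,\cdot)\|_{H^1}^{2} + \|\rho(t,\cdot)\|_{L^2}^{2} = 0$ for every $t$, whence $u \equiv 0$ and $\rho \equiv 0$.

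The step I expect to be the most delicate is the second one: rigorously justifying the splitting-and-independence argument uniformly in the non-periodic and periodic geometries, and in particular verifying in the periodic setting that the two basis functions arising from the restriction of the $\cosh$-kernel to $(x_0,x_1)$ are genuinely linearly independent on that arc. Everything else is bookkeeping: the Sobolev embedding $H^s(\mathbb{E}) \hookrightarrow C^1(\mathbb{E})$ for $s \geq 2$ makes the pointwise manipulation of the non-local equation legal, and \textbf{C1} provides the continuity needed to turn ``a.e.\ zero'' statements into pointwise vanishing of $E$.
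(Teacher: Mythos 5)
Your proof is correct and follows essentially the same route as the paper: the paper deduces this corollary immediately as the contrapositive of Theorem \ref{teo2.1}, whose proof likewise observes that $F_t=\partial_x\Lambda^{-2}f_t=-(u_t+uu_x)$ vanishes on the rectangle, forces $f_t\equiv 0$ from the behaviour of the kernel outside $(x_0,x_1)$, and then propagates the conclusion in time via the conserved Hamiltonian. The only presentational difference is that you unfold the convolution explicitly and invoke linear independence of $e^{x}$ and $e^{-x}$, where the paper packages the same computation as the endpoint identity $F_t(x_1)-F_t(x_0)=\int S_{x_0,x_1}(y)f_t(y)\,dy$ together with the strict positivity of $S_{x_0,x_1}$ outside $[x_0,x_1]$ (Lemma \ref{lema3.4} and Theorem \ref{teo3.2}).
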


Theorem \ref{teo2.1} says that if the horizontal velocity of the fluid and its horizontal deviation are constant on a segment during an interval of time $(t_0,t_1)$, then it implies that $u=\rho=0$ everywhere. As a result we do not have any perturbation propagating on the fluid's surface. On the other hand, as long as we impose that  \eqref{1.0.3} is subject to an initial data $(u,\rho)(0,x)=(u_0(x),\rho_0(x))\not\equiv0$, then we cannot find any open set on $[0,T)\times\e$ for which the solution vanishes, as stated in Corollary \ref{cor2.1}.

A small digression about compactly supported functions would be welcome at this stage. Let us begin with functions defined on the whole line (non-periodic case). The supports of the functions $u$ and $\rho$ are respectively given by
$$\supp{(u(t,\cdot))}:=\overline{\{u(t,x)\neq0,\,x\in\R\}}$$ 
and
$$\supp{(\rho(t,\cdot))}:=\overline{\{\rho(t,x)\neq0,\,x\in\R\}},$$
respectively. Let us consider the case where $u(t,\cdot)$ is compactly supported. This implies the existence of numbers $a_t$ and $b_t$ such that $\supp{(u(t,\cdot))}=[a_t,b_t]$. Note that as $t$ varies, the numbers $a_t$ and $b_t$ may also change.

The paragraph above is enough to clarify the situation for the non-periodic case. Let us now catch sight of the periodic case. Firstly, we note that if a function is non-trivial, then it cannot be compactly supported in the sense defined above because of the condition \eqref{1.0.5}. On the other hand, a solution of \eqref{1.0.3} satisfying \eqref{1.0.5} is completely known if we restrict it on $[0,1)$. Then, by a periodic compactly supported solution of \eqref{1.0.3} we mean a solution such that
$$
\supp{(u\big|_{[0,1)}(t,\cdot))}:=\overline{\{u(t,x)\neq0,\,x\in[0,1)\}}
$$ 
and 
$$\supp{(\rho\big|_{[0,1)}(t,\cdot))}:=\overline{\{\rho(t,x)\neq0,\,x\in[0,1)\}}$$
is compact.

We close this section with the following characterisation of the solutions of the CH2 system.
\begin{theorem}\label{teo2.2}
Suppose that $(u,\rho)$ is a solution of the CH2 system \eqref{1.0.3} defined on $[0,T)\times\e$, for some $T>0$, satisfying {\bf C3}. If there exists $t^\ast\in(0,T)$ and an open set $(x_0,x_1)\subseteq\e$ such that $u(t^\ast,x)=\rho(t^\ast,x)=0$, $x\in(x_0,x_1)$, then the solution $(u,\rho)$ vanishes everywhere. 
\end{theorem}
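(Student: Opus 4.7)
The plan is to combine a purely spatial propagation argument at the fixed time $t^{\ast}$ with the temporal propagation furnished by the conservation of ${\cal H}$.

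First, I would extract from the local vanishing at time $t^{\ast}$ a \emph{global} vanishing at that same time. Since $u(t^{\ast},x)=\rho(t^{\ast},x)=0$ on the open interval $(x_0,x_1)$, spatial differentiation yields $u_x(t^{\ast},x)=u_{xx}(t^{\ast},x)=0$ there, and hence $m(t^{\ast},x)=u(t^{\ast},x)-u_{xx}(t^{\ast},x)=0$ on $(x_0,x_1)$. Using the representation $u(t^{\ast},x)=(g\ast m)(t^{\ast},x)$ with the explicit kernel \eqref{1.0.4}, the identity $u(t^{\ast},x)=0$ for $x\in(x_0,x_1)$ becomes an integral equation for the portion of $m(t^{\ast},\cdot)$ supported outside $(x_0,x_1)$. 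Splitting this integral and exploiting the linear independence of the elementary components of $g$ (namely $e^{x}$ and $e^{-x}$ in the non-periodic case, and $\cosh$ and $\sinh$ in the periodic case) yields a family of vanishing-moment conditions for $m(t^{\ast},\cdot)$ on the complement of $(x_0,x_1)$.

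Building on these moment identities together with the analogous information for $\rho(t^{\ast},\cdot)$, I would then appeal to the technical lemmas promised in Section \ref{sec3}, whose core ingredient is an auxiliary function generalising the one introduced in \cite{linares} and revisited in \cite{freire-jpa}, now coupling $u$ and $\rho$ in a way dictated by the conserved density appearing in \eqref{2.0.1}. The expected output of applying these lemmas is the propagation of the local vanishing at the fixed time $t^{\ast}$ to the whole line (or to the whole circle in the periodic case), so that $u(t^{\ast},\cdot)\equiv 0$ and $\rho(t^{\ast},\cdot)\equiv 0$ on $\e$. Once this is in hand, the remaining step is immediate:
$$
{\cal H}(t^{\ast})=\f{1}{2}\|u(t^{\ast},\cdot)\|^2_{H^1(\e)}+\f{1}{2}\|\rho(t^{\ast},\cdot)\|^2_{L^2(\e)}=0,
$$
and by {\bf C3} the quantity ${\cal H}$ is constant in time, so ${\cal H}(t)=0$ for every $t\in[0,T)$. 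Being a sum of squared norms, this forces $u(t,\cdot)\equiv 0$ and $\rho(t,\cdot)\equiv 0$ for every $t\in[0,T)$, finishing the argument.

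The delicate point I anticipate is the spatial step: since we are working at a single instant of time, there is no transport along characteristics to lean on, and the extension from the segment $(x_0,x_1)$ to the entire $\e$ must be realised purely through the non-locality of $g$ together with the algebraic constraints linking $u$, $u_x$, $m$ and $\rho$ enforced by \eqref{1.0.3}. Designing the auxiliary function so that both the non-local information carried by the convolution and the coupling with $\rho$ are encoded simultaneously is what I expect to be the main obstacle; once this purely spatial lemma is secured, the passage to all times via ${\cal H}$ is essentially free.
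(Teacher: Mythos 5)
Your closing step --- ${\cal H}(t^\ast)=0$ plus conservation of \eqref{2.0.1} forces $(u,\rho)\equiv(0,0)$ for all $t$ --- is exactly the paper's Lemma \ref{lema3.1}/Lemma \ref{lema3.2}, so that part is fine. The gap is in the spatial step at the frozen time $t^\ast$. The moment conditions you extract from $u(t^\ast,\cdot)=(g\ast m)(t^\ast,\cdot)=0$ on $(x_0,x_1)$, namely $\int_{y<x_0}e^{y}m(t^\ast,y)\,dy=0$ and $\int_{y>x_1}e^{-y}m(t^\ast,y)\,dy=0$, cannot force $m(t^\ast,\cdot)$ to vanish off the interval, because $m$ has no sign. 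More fundamentally, no purely static argument can succeed: an arbitrary pair of functions vanishing on an open interval need not vanish globally, so the propagation must come from the PDE --- and specifically from its time derivative at $t^\ast$, which is precisely the ingredient you explicitly set aside (``no transport along characteristics to lean on'').

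What the paper actually does is the following. Writing the system in the non-local form \eqref{3.0.1}, one has $F_t(x):=\p_x\Lambda^{-2}f_t(x)=-(u_t+uu_x)(t,x)$, where $f_t=u^2+\f{1}{2}u_x^2+\f{1}{2}\rho^2\geq0$ is the sign-definite density (you correctly guessed that the auxiliary function couples $u$ and $\rho$ through the conserved density, but the argument runs through $f_t$, not through $m$). At $t=t^\ast$ all of $u,u_x,u_{xx},\rho,\rho_x$ vanish on $(x_0,x_1)$, so the first equation of \eqref{1.0.3} reduces there to $(1-\p_x^2)u_t(t^\ast,\cdot)=0$, from which the paper concludes $u_t(t^\ast,\cdot)=0$ and hence $F_{t^\ast}\equiv0$ on the interval. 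This is the identity your plan never produces: it supplies two points $a<b$ with $F_{t^\ast}(a)=F_{t^\ast}(b)$, and then Lemma \ref{lema3.4} and Theorem \ref{teo3.2} give $0=F_{t^\ast}(b)-F_{t^\ast}(a)=\int_\e S_{a,b}(y)f_{t^\ast}(y)\,dy$ with $S_{a,b}>0$ off $[a,b]$ and $f_{t^\ast}\geq0$, forcing $f_{t^\ast}\equiv0$ and therefore $u(t^\ast,\cdot)=\rho(t^\ast,\cdot)=0$ on all of $\e$. In short: the non-negativity that closes the convolution argument lives in $f_{t^\ast}$, and the equal-values condition on $F_{t^\ast}$ must be earned from the evolution equation at the frozen instant, not from the non-locality of $g$ alone.
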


Note that we do not prevent the situation $\rho=0$ in the theorem above. Therefore, if we add this hypothesis, then the result is immediately concerned with the CH equation. We, however, avoid such an explicit statement because it is straightforward. Still, observe that Theorem \ref{teo2.2} implies on the non-existence of non-trivial compactly supported solutions of the CH2 system and the CH equation as well.

\begin{corollary}\label{cor2.2}
A non-trivial solution $(u,\rho)\in H^s(\e)\times H^{s-1}(\e)$, $s\geq2$, of \eqref{1.0.3} satisfying {\bf C1} and {\bf C3} cannot have the functions $u$ and $\rho$ simultaneously compactly supported for any value of $t>0$.
\end{corollary}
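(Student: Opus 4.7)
The plan is to argue by contradiction, reducing the corollary to Theorem \ref{teo2.2}. Suppose, toward a contradiction, that there exists a non-trivial solution $(u,\rho)\in H^s(\e)\times H^{s-1}(\e)$, $s\geq 2$, of \eqref{1.0.3} satisfying {\bf C1} and {\bf C3}, and that there is a time $t^\ast>0$ at which $u(t^\ast,\cdot)$ and $\rho(t^\ast,\cdot)$ are simultaneously compactly supported (in the sense made precise in the discussion preceding the statement). My aim is to produce an open interval $(x_0,x_1)\subseteq\e$ on which both functions vanish at $t=t^\ast$, so that Theorem \ref{teo2.2} forces $(u,\rho)\equiv(0,0)$, contradicting non-triviality.

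The construction of this interval splits according to whether $\e=\R$ or $\e=\s$. In the non-periodic case, the hypothesis yields bounded closed intervals $[a^u_{t^\ast},b^u_{t^\ast}]\supseteq\supp(u(t^\ast,\cdot))$ and $[a^\rho_{t^\ast},b^\rho_{t^\ast}]\supseteq\supp(\rho(t^\ast,\cdot))$. Setting $b^\ast:=\max\{b^u_{t^\ast},b^\rho_{t^\ast}\}$, the half-line $(b^\ast,+\infty)$ lies in the complement of both supports, and any choice $b^\ast<x_0<x_1$ furnishes the desired open interval. In the periodic case, compact support is taken in the sense of the restriction to $[0,1)$, so the union of $\supp(u\big|_{[0,1)}(t^\ast,\cdot))$ and $\supp(\rho\big|_{[0,1)}(t^\ast,\cdot))$ is a compact proper subset of $[0,1)$; its complement in $[0,1)$ is a non-empty open set and, in particular, contains some open interval $(x_0,x_1)$ on which both functions vanish.

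In either case the hypotheses of Theorem \ref{teo2.2} are met at the time $t^\ast$: {\bf C3} is assumed by the statement, and the regularity in {\bf C1} (which is inherited from $H^s(\e)\times H^{s-1}(\e)$ with $s\geq 2$) guarantees that the Hamiltonian \eqref{2.0.1} is well-defined and conserved. Theorem \ref{teo2.2} therefore implies $(u,\rho)\equiv(0,0)$ on $[0,T)\times\e$, contradicting the standing non-triviality assumption and closing the argument.

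I do not anticipate a substantive obstacle, since the corollary is essentially a repackaging of the pointwise unique continuation statement of Theorem \ref{teo2.2} in terms of supports. The only mildly delicate point is to treat the two meanings of \emph{compactly supported}, periodic versus non-periodic, in a unified way; this is why the case distinction above is necessary, but in each case the existence of the open vanishing interval is an elementary consequence of the definition of support.
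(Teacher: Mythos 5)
Your proposal is correct and follows essentially the same route as the paper: assume both functions are compactly supported at some $t^\ast>0$, extract an open interval outside the union of the supports on which $u(t^\ast,\cdot)$ and $\rho(t^\ast,\cdot)$ vanish, and invoke Theorem \ref{teo2.2} to contradict non-triviality. The paper states this in one line; your version merely makes explicit the (correct) case distinction between $\e=\R$ and $\e=\s$.
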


\section{Preliminary results}\label{sec3}

We begin with noticing that \eqref{1.0.3} can be rewritten in the non-local form
\begin{equation}\label{3.0.1}
\left\{\ba{l}
\ds{u_t+uu_x=-\p_x\Lambda^{-2}\left(u^2+\f{1}{2}u_x^2+\f{1}{2}\rho^2\right)},\\
\\
\ds{\rho_t+(u\rho)_x=0.}
\ea\right.
\end{equation}

Throughout this section we shall assume that the solution $(u,\rho)$ of the system \eqref{1.0.3} exists for $t\in[0,T)$, for some $T>0$.

\begin{lemma}\label{lema3.1}
Let $(u,\rho)$ be a solution of \eqref{1.0.3} satisfying ${\bf C3}$. If there exists $t_0\in\R$ such that ${\cal H}(t_0)=0$, then ${\cal H}(t)=0$, for all $t$. In particular, $u(t,x)=\rho(t,x)=0$ for all $[0,T)\times\e$.
\end{lemma}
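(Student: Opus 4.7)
The plan is very direct: exploit that the Hamiltonian $\mathcal{H}$ is the sum of two non-negative quantities, namely a squared Sobolev norm and a squared $L^2$ norm, and combine this with the conservation property \textbf{C3}.

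First, I would invoke \textbf{C3} to deduce that $\mathcal{H}(t)$ is constant in time. Since by hypothesis $\mathcal{H}(t_0)=0$ for some $t_0$, this immediately gives $\mathcal{H}(t)=0$ for all $t\in[0,T)$. This is the only use of \textbf{C3}, and it disposes of the first assertion of the lemma.

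Next, I would use the explicit representation
\[
\mathcal{H}(t)=\tfrac{1}{2}\|u(t,\cdot)\|_{H^1(\e)}^2+\tfrac{1}{2}\|\rho(t,\cdot)\|_{L^2(\e)}^2
\]
displayed in \eqref{2.0.1}. Both summands are non-negative, so their sum vanishing forces each of them to vanish separately for every $t\in[0,T)$. Thus $\|u(t,\cdot)\|_{H^1(\e)}=0$ and $\|\rho(t,\cdot)\|_{L^2(\e)}=0$, which is to say $u(t,\cdot)$ and $\rho(t,\cdot)$ are the zero element in their respective function spaces. Because these are norms (not merely seminorms) on $H^1(\e)$ and $L^2(\e)$, this yields $u(t,x)=0$ and $\rho(t,x)=0$ for almost every $x\in\e$; by the (mild) regularity implicit in working with a solution of \eqref{1.0.3}, this equality holds everywhere in $[0,T)\times\e$.

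There is essentially no obstacle: the lemma is a structural consequence of the fact that the conserved quantity associated with the CH2 system is positive definite, and the only thing to check is that the two summands in $\mathcal{H}$ genuinely control $u$ and $\rho$ as norms, which is immediate from \eqref{2.0.1}. The main point of the lemma is rather its role in the larger strategy outlined in the introduction: reducing the problem of unique continuation / compact support to the task of producing a single time $t^\ast$ at which $\mathcal{H}$ vanishes.
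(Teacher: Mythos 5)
Your proposal is correct and follows essentially the same route as the paper: conservation via \textbf{C3} forces $\mathcal{H}(t)=\mathcal{H}(t_0)=0$ for all $t$, and then the positive-definiteness of the two norm terms in \eqref{2.0.1} forces $u$ and $\rho$ to vanish. The only addition is your (harmless) remark about passing from almost-everywhere to everywhere vanishing, which the paper leaves implicit.
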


\begin{proof}
Since ${\cal H}(t)$ is conserved and ${\cal H}(t_0)=0$, then ${\cal H}(t)={\cal H}(t_0)=0$, $t\in[0,T)$. Moreover, we have $0=2{\cal H}(t)=\|u(t,\cdot)\|^2_{H^1(\e)}+\|\rho(t,\cdot)\|_{L^2(\e)}^2$, which implies $(u,\rho)=(0,0)$. 
\end{proof}

Lemma \ref{lema3.1} is nothing but the conservation of energy of the physical phenomenon described by \eqref{1.0.3}. Its essence is: if the energy of the system vanishes at some specific time $t_0$, then it vanishes everywhere.

\begin{lemma}\label{lema3.2}
Let $(u,\rho)$ be a solution of the system \eqref{1.0.3} satisfying the condition ${\bf C1}$ and let us define, for each $t\in[0,T)$, the real valued function
\begin{equation}\label{3.0.2}
   x\mapsto  f_t(x):=\left(u^2+\f{1}{2}u_x^2+\f{1}{2}\rho^2\right)(t,x).
\end{equation}
Then:
\begin{enumerate}
    \item For each $t\in[0,T)$, the function $f_t(\cdot)$ is non-negative and continuous.
    \item If the solution satisfies the condition ${\bf C3}$ and there exists $t^\ast\in[0,T)$ such that $f_{t^\ast}(x)=0$, for each $x\in\e$, then $(u,\rho)\equiv(0,0)$. In particular, $f_t(\cdot)\equiv0$, for all $t\in[0,T)$.
\end{enumerate}
\end{lemma}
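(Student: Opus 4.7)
The plan is to observe that this lemma essentially reduces to unpacking the definitions and then invoking Lemma \ref{lema3.1}.

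For part (1), the function $f_t(x)$ is a sum $u^2 + \tfrac{1}{2}u_x^2 + \tfrac{1}{2}\rho^2$ of non-negative terms, so non-negativity is immediate. Continuity of $x \mapsto f_t(x)$ for each fixed $t \in [0,T)$ is precisely what condition ${\bf C1}$ asserts, since that hypothesis says $(u^2 + \tfrac{1}{2}u_x^2 + \tfrac{1}{2}\rho^2)(t,\cdot) \in C^0(\mathbb{R})$ for each such $t$. So there is essentially nothing to prove here beyond citing ${\bf C1}$.

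For part (2), the strategy is to convert the pointwise vanishing at time $t^\ast$ into vanishing of the Hamiltonian $\mathcal{H}$ at that time, and then let Lemma \ref{lema3.1} finish the job. Concretely: if $f_{t^\ast}(x) = 0$ for every $x \in \mathbb{E}$, then, since $f_{t^\ast}$ is a sum of squares, each term vanishes pointwise, i.e.\ $u(t^\ast, x) = u_x(t^\ast, x) = \rho(t^\ast, x) = 0$ on $\mathbb{E}$. Integrating gives
\begin{equation*}
\mathcal{H}(t^\ast) = \int_{\mathbb{E}} f_{t^\ast}(x)\, dx = 0.
\end{equation*}
Because we have assumed condition ${\bf C3}$, Lemma \ref{lema3.1} applies and yields $(u,\rho) \equiv (0,0)$ on $[0,T) \times \mathbb{E}$. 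The "in particular" statement that $f_t \equiv 0$ for all $t$ then follows trivially from the definition of $f_t$.

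I do not anticipate any real obstacle in this proof; the only thing to be slightly careful about is the interchange between "$f_{t^\ast}$ vanishes pointwise on $\mathbb{E}$" and "$\mathcal{H}(t^\ast) = 0$," which is justified by either continuity of $f_{t^\ast}$ (for the non-periodic case, where we might otherwise worry about integrability) combined with the Sobolev regularity implicit in ${\bf C3}$ ensuring $\mathcal{H}(t)$ is finite and equals the integral of $f_t$. In particular, the hypotheses needed for Lemma \ref{lema3.1} to conclude $(u,\rho) = (0,0)$ from $\mathcal{H}(t^\ast) = 0$ are exactly ${\bf C3}$ together with finiteness of $\mathcal{H}$, both of which are in force.
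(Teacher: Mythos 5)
Your proof is correct and follows essentially the same route as the paper: non-negativity and continuity come directly from the construction and \textbf{C1}, and vanishing of $f_{t^\ast}$ forces $u(t^\ast,\cdot)=\rho(t^\ast,\cdot)=0$, hence ${\cal H}(t^\ast)=0$, after which Lemma \ref{lema3.1} concludes. One small imprecision: the identity ${\cal H}(t^\ast)=\int_\e f_{t^\ast}(x)\,dx$ is not true in general (the correct relation is \eqref{3.0.3}, ${\cal H}(t)+\f{1}{2}\|u(t,\cdot)\|^2_{L^2(\e)}=\int_\e f_t(x)\,dx$), though it is harmless here since all terms vanish.
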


\begin{proof}
It is immediate that $f_t(\cdot)$ is non-negative and continuous by construction. Let us assume the existence of $t^\ast$ such that $f_{t^\ast}(x)=0$, for all $x\in\e$. Then we conclude that $u(t^\ast,\cdot)^2=\rho(t^\ast,\cdot)^2=0$ and the result is a consequence of Lemma \ref{lema3.1}.
\end{proof}

Let us interpret the function defined in \eqref{3.0.2} for solutions conserving \eqref{2.0.1}. The following identity is immediate from \eqref{2.0.1} and \eqref{3.0.2}:
\begin{equation}\label{3.0.3}
{\cal H}(t)+\f{1}{2}\|u(t,\cdot)\|^2_{L^2(\e)}=\int_\e f_t(x)dx\Rightarrow 0\leq{\cal H}(t)\leq \int_\e f_t(x)dx\leq 2{\cal H}(t).
\end{equation}

From \eqref{3.0.2} and \eqref{3.0.3} we conclude that ${\cal H}(t)$ vanishes if and only if $f_t(\cdot)$ does vanish too. In particular, if this happens, then $u(t,\cdot)\equiv0$.
\begin{lemma}\label{lema3.3}
Let $(f_t)_{t\in[0,t)}$ be the family of functions given by \eqref{3.0.2}. For each $t\in[0,T)$, $\Lambda^{-2}f_{t}(x)=0$ if and only if $f_t(x)=0$.
\end{lemma}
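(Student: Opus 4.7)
The plan is to exploit the strict positivity of the Helmholtz Green's function $g$ from \eqref{1.0.4} together with the non-negativity and continuity of $f_t$ granted by Lemma \ref{lema3.2}. In both geometries the kernel is pointwise positive: on $\R$ one has $g(x)=e^{-|x|}/2>0$, and on $\s$ the expression $\cosh(x-\lfloor x\rfloor-1/2)/(2\sinh(1/2))$ is bounded below by $1/(2\sinh(1/2))>0$ since $\cosh\geq 1$. This structural positivity of $g$ is essentially the only analytic input needed.

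The trivial direction $f_t\equiv 0\Rightarrow \Lambda^{-2}f_t\equiv 0$ follows at once from $\Lambda^{-2}f_t=g\ast f_t$. For the substantive direction, I would fix any $x_0\in\e$ with $\Lambda^{-2}f_t(x_0)=0$ and expand
\[
0=(g\ast f_t)(x_0)=\int_\e g(x_0-y)\,f_t(y)\,dy.
\]
The integrand is the product of a strictly positive function and a non-negative one (Lemma \ref{lema3.2}, item 1), hence is itself non-negative. Vanishing of its integral then forces $g(x_0-y)\,f_t(y)=0$ for almost every $y\in\e$; strict positivity of $g$ gives $f_t(y)=0$ for a.e.\ $y$, and continuity of $f_t$ promotes this to $f_t\equiv 0$ on $\e$. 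In particular $f_t(x_0)=0$, completing the equivalence.

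I do not anticipate any real obstacle; the argument is essentially a sign/positivity consideration. The only mild technicality is verifying that $g>0$ uniformly on $\s$ in the periodic case (handled by $\cosh\geq 1$), and the routine step of using continuity of $f_t$ to pass from an almost-everywhere vanishing statement to an everywhere vanishing one.
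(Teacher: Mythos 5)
Your proof is correct and follows essentially the same route as the paper: both arguments rest on the strict positivity of the kernel $g$ from \eqref{1.0.4} and the non-negativity of $f_t$, so that vanishing of the convolution forces $f_t$ to vanish. If anything, your write-up is slightly more careful than the paper's, since you make explicit that vanishing of $(g\ast f_t)(x_0)$ at a \emph{single} point already forces $f_t\equiv 0$ (via the almost-everywhere-to-everywhere step using continuity), which is precisely the strength of the statement needed later in Theorem \ref{teo3.1}.
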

\begin{proof}
We recall that
$$
\Lambda^{-2}f_{t}(x)=\int_\e g(x-y)f_t(y)dy.
$$

The result is immediate due to the following observations: firstly, the function $g(\cdot)$ given by \eqref{1.0.4} is strictly positive. Secondly, the function $f_t(x)$ is non-negative, for each $t\in[0,T)$. Finally, from the previous observations, the convolution $g\ast f_t=0$ if and only if $f_t=0$.
\end{proof}

Note that if we are able to find one, and only one, number $t^\ast$ for which $f_{t^\ast}(x)=0$, then \eqref{3.0.3} jointly with Lemma \ref{lema3.1} imply that the solution of the system \eqref{1.0.3} vanishes everywhere. 

Let us now define a second family of functions, given by $F_t(x)=(\p_x\Lambda^{-2}f_t)(x)$, that is
\begin{equation}\label{3.0.4}
F_t(x)=\int_\e \p_xg(x-y)\left(u^2+\f{1}{2}u_x^2+\f{1}{2}\rho^2\right)(t,y)dy.
\end{equation}

\begin{theorem}\label{teo3.1}
Let $(F_t(\cdot))_{t\in[0,T)}$ and $(f_t(\cdot))_{t\in[0,T)}$ be the families of functions given by \eqref{3.0.4} and \eqref{3.0.2}, respectively. If $(u,\rho)$ satisfies the conditions ${\bf C1}$ and ${\bf C3}$, and there exists numbers $t^\ast$, $x_0$ and $x_1$, with $x_0<x_1$, such that $\{t^\ast\}\times[x_0,x_1]\subseteq (0,T)\times\e$, $f_{t^\ast}\big|_{(x_0,x_1)}\equiv0$ and $F_{t^\ast}(x_0)=F_{t^\ast}(x_1)$, then $F_t(\cdot)\equiv0$, for all $t\in[0,T)$. In particular, $f_t(\cdot)\equiv0$ and $(u,\rho)(t,\cdot)\equiv0$, for all $t\in[0,T)$. 
\end{theorem}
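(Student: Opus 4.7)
The plan is to exploit the non-local structure of $F_{t^*}$ together with the non-negativity of $f_{t^*}$ to turn the local information $f_{t^*}\equiv 0$ on $(x_0,x_1)$ into global vanishing $f_{t^*}\equiv 0$ on $\e$. Once that is achieved, part 2 of Lemma \ref{lema3.2} gives $(u,\rho)\equiv(0,0)$ on $[0,T)\times\e$, whence $f_t\equiv 0$ and $F_t\equiv 0$ for every $t\in[0,T)$.

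Concretely, I would first evaluate $F_{t^*}(x)$ for $x\in(x_0,x_1)$ using the explicit kernel $g$ of \eqref{1.0.4}. Because $\partial_x g$ has a jump at the origin, one splits the convolution at $y=x$; as $f_{t^*}$ vanishes on $(x_0,x_1)$, the contribution of $y\in[x_0,x_1]$ drops out. In the non-periodic case this yields
\[
F_{t^*}(x)=\tfrac{1}{2}\bigl(-Ae^{-x}+Be^{x}\bigr),\quad x\in(x_0,x_1),
\]
with $A:=\int_{-\infty}^{x_0}e^{y}f_{t^*}(y)\,dy\geq 0$ and $B:=\int_{x_1}^{+\infty}e^{-y}f_{t^*}(y)\,dy\geq 0$. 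In the periodic case, expanding $\sinh$ into exponentials produces the same shape $F_{t^*}(x)=\alpha e^{x}+\beta e^{-x}$, with $\alpha\geq 0$, $\beta\leq 0$, and $\alpha$, $-\beta$ expressed as positive linear combinations of the integrals $\int_{0}^{x_0}e^{\mp y}f_{t^*}\,dy$ and $\int_{x_1}^{1}e^{\mp y}f_{t^*}\,dy$.

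Next I would impose the hypothesis $F_{t^*}(x_0)=F_{t^*}(x_1)$. Writing $F_{t^*}(x)=\alpha e^{x}+\beta e^{-x}$ in either setting, this rearranges to
\[
0=F_{t^*}(x_1)-F_{t^*}(x_0)=\alpha(e^{x_1}-e^{x_0})+(-\beta)(e^{-x_0}-e^{-x_1}),
\]
a sum of two non-negative terms (using $x_0<x_1$ together with $\alpha\geq 0$ and $-\beta\geq 0$); hence each of them must vanish, so $\alpha=\beta=0$. Since the exponential weights are strictly positive and $f_{t^*}\geq 0$, this forces each of the underlying integrals of $f_{t^*}$ to vanish, and therefore $f_{t^*}\equiv 0$ on $\e\setminus(x_0,x_1)$. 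Combined with the hypothesis, $f_{t^*}\equiv 0$ on all of $\e$, and Lemma \ref{lema3.2} closes the argument as indicated in the first paragraph.

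The main technical nuisance I anticipate is the periodic case, where $\partial_x g$ carries the floor function appearing in \eqref{1.0.4}: one has to split the convolution carefully into the sub-intervals $y<x$ and $y>x$ inside $[0,1)$, expand the resulting $\sinh$ terms into exponentials, and then verify that the coefficients of $e^{x}$ and $e^{-x}$ acquire opposite signs. Once this sign structure is secured, the algebra above applies verbatim in both geometries.
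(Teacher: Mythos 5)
Your proposal is correct, but it reaches the conclusion by a different mechanism than the paper. The paper's proof is operator-theoretic and avoids any explicit computation with the kernel: it uses the identity $\p_x^2\Lambda^{-2}=\Lambda^{-2}-1$ to get $F_{t^\ast}'=\Lambda^{-2}f_{t^\ast}-f_{t^\ast}=\Lambda^{-2}f_{t^\ast}\geq 0$ on $(x_0,x_1)$, applies the Fundamental Theorem of Calculus to write $0=F_{t^\ast}(x_1)-F_{t^\ast}(x_0)=\int_{x_0}^{x_1}(\Lambda^{-2}f_{t^\ast})(y)\,dy$, concludes $\Lambda^{-2}f_{t^\ast}=0$ on the interval from non-negativity, and then invokes Lemma \ref{lema3.3} (strict positivity of $g$) and Lemma \ref{lema3.2} to finish. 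You instead compute $F_{t^\ast}$ explicitly on the gap as $\alpha e^{x}+\beta e^{-x}$ with $\alpha\geq 0$, $\beta\leq 0$ given by weighted integrals of $f_{t^\ast}$ over $\e\setminus[x_0,x_1]$, and read off $\alpha=\beta=0$ from $F_{t^\ast}(x_1)=F_{t^\ast}(x_0)$; I checked the sign bookkeeping in both the real-line and periodic cases and it works (in the periodic case the coefficients of $e^{x}$ and $e^{-x}$ do come out with opposite signs after expanding the two $\sinh$ branches). Your route is in fact closer to the paper's Lemma \ref{lema3.4} and Theorem \ref{teo3.2}, since identifying the difference $F_{t^\ast}(x_1)-F_{t^\ast}(x_0)$ as an integral of $f_{t^\ast}$ against an explicitly positive weight is exactly what the kernel $S_{x_0,x_1}(y)=g'(x_1-y)-g'(x_0-y)$ encodes; what the paper's own proof of Theorem \ref{teo3.1} buys is brevity and independence from the explicit form of $g$ (only $g>0$ is needed), while your computation buys a concrete formula for $F_{t^\ast}$ on the gap at the cost of the case analysis with the floor function. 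Two minor points worth making explicit if you write this up: you should note that $f_{t^\ast}\in L^1(\e)$ (which follows from {\bf C3}) so that the integrals $A$ and $B$ converge, and that passing from ``the weighted integrals vanish'' to ``$f_{t^\ast}\equiv 0$ off $[x_0,x_1]$'' uses the continuity of $f_{t^\ast}$ from {\bf C1}.
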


\begin{proof}
Since $F_t(\cdot)\in C^1(\R)$ and recalling that $\p_x^2\Lambda^{-2}=\Lambda^{-2}-1$, if $x\in(x_0,x_1)$, then
$$F_{t^\ast}'(x)=(\Lambda^{-2}f_{t^\ast})(x)-f_{t^\ast}(x)=(\Lambda^{-2}f_{t^\ast})(x).$$
By the Fundamental Theorem of Calculus we have
$$
0=F_{t^\ast}(x_1)-F_{t^\ast}(x_0)=\int_{x_0}^{x_1}(\Lambda^{-2}f_{t^\ast})(y)dy.
$$
Since $(\Lambda^{-2}f_{t^\ast})(y)\geq0$, we necessarily have $(\Lambda^{-2}f_{t^\ast})(y)=0$. The result is then a consequence of lemmas \ref{lema3.3} and \ref{lema3.2}.
\end{proof}

Let us explore \eqref{3.0.4} a little more. We note that
$$
\p_x g(x)=\left\{
\begin{array}{lcl}
\ds{-\f{1}{2}\sign{(x)}e^{-|x|}},&\text{if}&\e=\R,\\
\\
\ds{\frac{\sinh(x-\lfloor x \rfloor - 1/2)}{2\sinh(1/2)}},&\text{if}&\e=\s\,\,\text{and}\,\,x\neq0.
\end{array}
\right.
$$

For convenience, in the next lemma we denote $\p_xg(\cdot)$ by $g'(\cdot)$.

\begin{lemma}\label{lema3.4}
Suppose that $a,b\in\e$ are numbers such that $a<b$, and let us define $S_{a,b}:\e\rightarrow\e$ by $$
S_{a,b}(y)=\left\{
\begin{array}{lcl}
\ds{\f{1}{2}\sign{(a-y)}e^{-|a-y|}-\f{1}{2}\sign{(b-y)}e^{-|b-y|}},&\text{if}&\e=\R,\\
\\
\ds{\frac{\sinh(b-y-\lfloor b-y \rfloor - 1/2)}{2\sinh(1/2)}-\frac{\sinh(a-y-\lfloor a-y \rfloor - 1/2)}{2\sinh(1/2)}},&\text{if}&\e=\s.
\end{array}
\right.
$$

Then $S_{a,b}\in L^1(\e)$, $S_{a,b}(y)>0$, for all $y\in\e\setminus[a,b]$, and 
\begin{equation}\label{3.0.5}
F_{t}(b)-F_t(a)=\int_{\e}S_{a,b}(y)f_t(y)dy.
\end{equation}

\end{lemma}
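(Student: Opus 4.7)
The plan is to derive \eqref{3.0.5} directly from \eqref{3.0.4} by linearity of the integral and then read off integrability and the sign of the kernel $S_{a,b}$ by inspection. Concretely, I would first write, using \eqref{3.0.4},
$$F_t(b)-F_t(a)=\int_\e\bigl[\p_x g(b-y)-\p_x g(a-y)\bigr]f_t(y)\,dy,$$
and check, by plugging in the piecewise formula for $\p_x g$ printed just before the lemma, that the bracketed kernel coincides with $S_{a,b}(y)$. On the line this uses $\p_x g(z)=-\tfrac{1}{2}\sign(z)e^{-|z|}$, where the two minus signs combined with the order swap in the definition of $S_{a,b}$ produce the stated expression; on the circle it is the literal rewriting of the two $\sinh$ terms over the common denominator $2\sinh(1/2)$. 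Integrability of $S_{a,b}$ is then immediate: on $\R$ each constituent term decays like $e^{-|y|}$ at infinity, while on $\s$ the function is bounded on a set of finite measure.

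For positivity on $\e\setminus[a,b]$ I would argue by cases. On the line, for $y<a$ both $a-y$ and $b-y$ are positive, so $S_{a,b}(y)=\tfrac{1}{2}e^{y}(e^{-a}-e^{-b})>0$ since $a<b$; for $y>b$ both expressions are negative and a symmetric computation gives $\tfrac{1}{2}e^{-y}(e^{b}-e^{a})>0$. On the circle (WLOG $0\le a<b<1$), for $y\in[0,a)$ both $a-y$ and $b-y$ lie in $(0,1)$, so $\lfloor a-y\rfloor=\lfloor b-y\rfloor=0$ and $S_{a,b}(y)$ is proportional to $\sinh(b-y-\tfrac{1}{2})-\sinh(a-y-\tfrac{1}{2})>0$ by strict monotonicity of $\sinh$; for $y\in(b,1)$ both floors equal $-1$ and the analogous identity with $\sinh(b-y+\tfrac{1}{2})-\sinh(a-y+\tfrac{1}{2})$ is again positive for the same reason.

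The only genuine obstacle is this periodic bookkeeping, i.e.\ correctly tracking the jumps of $\lfloor a-y\rfloor$ and $\lfloor b-y\rfloor$ as $y$ crosses $a$ and $b$ inside $[0,1)$. Once those jumps are settled, positivity in each region reduces to strict monotonicity of $\sinh$ (or of $e^{-|\cdot|}$ on each half-line of $\R$). Although not required by the statement, a parallel check shows $S_{a,b}<0$ on the complementary interval $(a,b)$, which is exactly the sub-interval on which $f_{t^\ast}$ is assumed to vanish in Theorem \ref{teo3.1}; this is what makes the integral identity \eqref{3.0.5} sign-definite in the intended application and explains why the lemma is phrased only in terms of the exterior region.
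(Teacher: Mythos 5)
Your proposal is correct and follows essentially the same route as the paper: the identity \eqref{3.0.5} is read off from \eqref{3.0.4} via $S_{a,b}(y)=\p_xg(b-y)-\p_xg(a-y)$, and positivity outside $[a,b]$ is established by the same case split ($y<a$ and $y>b$ on the line; $y\in[0,a)$ and $y\in(b,1)$ on the circle, with the same floor-function bookkeeping and monotonicity of $\sinh$). Your explicit closed forms $\tfrac{1}{2}e^{y}(e^{-a}-e^{-b})$ and $\tfrac{1}{2}e^{-y}(e^{b}-e^{a})$ and the side remark on the sign of $S_{a,b}$ inside $(a,b)$ are harmless refinements of the paper's inequality chains.
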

\begin{proof}
Firstly, we note that whenever $g'(\cdot)$ is defined, then $S_{a,b}(y)=g'(b-y)-g'(a-y)$.

It follows immediately from the definition of $S_{a,b}$ that it belongs to $ L^1(\e)$, whereas \eqref{3.0.5} is a consequence of \eqref{3.0.4}, the fact that except for a set of measure $0$, $S_{a,b}(y)=g'(b-y)-g'(a-y)$, and the definition of $g'(\cdot)$.

Let us now prove that $S_{a,b}$ is strictly positive outside $[a,b]$. We divide our demonstration in two cases: $\e=\R$ and $\e=\s$.

{\bf Case $\e=\R$.} If $y<a$, then $|a-y|<|b-y|$, $-|a-y|>-|b-y|$ and $e^{-|a-y|}>e^{-|b-y|}$. Moreover, $\sign{(a-y)}=\sign{(b-y)}=+1$ and, therefore,
$$\sign{(a-y)}e^{-|a-y|}>\sign{(b-y)}e^{-|b-y|},$$
which yields the result.

The only other possibility is $y>b$, which implies that $|b-y|<|a-y|$, $\sign{(y-b)}=\sign{(y-a)}=+1$. As a consequence of these observations, we have $e^{-|a-y|}<e^{-|b-y|}$ and $$\sign{(y-a)}e^{-|a-y|}<\sign{(y-b)e^{-|b-y|}},$$
which again implies the result.

{\bf Case $\e=\s$.} We recall that $0\leq a<b<1$. If $y\in[0,a)$ (if $a=0$, then we have nothing to consider), then $0< a-y<b-y<1$, which gives $\lfloor a-y \rfloor=\lfloor b-y \rfloor =0$, and we get the inequality
\begin{equation}\label{3.0.6}
a-y-\lfloor a-y \rfloor -\f{1}{2}<b-y-\lfloor b-y \rfloor -\f{1}{2}.
\end{equation}

On the other hand, if $y\in(b,1)$, then both $b-y$ and $a-y$ belong to $(-1,0)$, meaning that $\lfloor b-y \rfloor=\lfloor a-y \rfloor=-1$ and $a-y<b-y$. Also, we have
\begin{equation}\label{3.0.7}
a-y-\lfloor a-y \rfloor -\f{1}{2}<b-y-\lfloor b-y \rfloor -\f{1}{2}.
\end{equation}

The result follows from \eqref{3.0.6}, \eqref{3.0.7} and the fact that the function $x\mapsto\sinh{(x)}$ is strictly increasing.
\end{proof}

A demonstration that the function $S_{a,b}$ above satisfies the conditions in Lemma \ref{lema3.4} can be inferred from some results proved in \cite[Sect. 2]{linares}, see also \cite[Prop. 4.2]{freire-jpa}. We opted to present a demonstration of this result for sake of completeness and coherence of the present work.

We present a different, but useful, version of Theorem \ref{teo3.1} using Lemma \ref{lema3.4}.

\begin{theorem}\label{teo3.2}
If there are distinct points $a,b\in\e$ such that $F_{t}(a)=F_t(b)$ and $f_t(x)=0$, $x\in(a,b)$, for some $t\in(0,T)$, then $f_{t}(x)=0$, for all $x\in\e.$
\end{theorem}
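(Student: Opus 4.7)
The plan is to apply Lemma \ref{lema3.4} directly, since the hypotheses of Theorem \ref{teo3.2} (with the implicit regularity from \textbf{C1} that makes $f_t$ continuous) line up exactly with what is needed to use the integral identity \eqref{3.0.5}. Specifically, I would start by writing
\begin{equation*}
0 = F_t(b) - F_t(a) = \int_{\mathbb{E}} S_{a,b}(y)\, f_t(y)\, dy,
\end{equation*}
and then split the domain of integration into $[a,b]$ and $\mathbb{E}\setminus[a,b]$.

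On $[a,b]$, the hypothesis gives $f_t(y) = 0$ for $y \in (a,b)$, and continuity of $f_t$ (Lemma \ref{lema3.2}) extends this to the closed interval $[a,b]$; hence the contribution of $[a,b]$ to the integral vanishes. On $\mathbb{E}\setminus[a,b]$, Lemma \ref{lema3.4} guarantees $S_{a,b}(y) > 0$, while Lemma \ref{lema3.2} guarantees $f_t(y) \geq 0$. Therefore the integrand is pointwise non-negative on $\mathbb{E}\setminus[a,b]$, and its integral over this set equals $0$. Standard measure-theoretic reasoning forces $S_{a,b}(y)\,f_t(y) = 0$ almost everywhere on $\mathbb{E}\setminus[a,b]$, and combined with the strict positivity of $S_{a,b}$ there, we conclude $f_t(y) = 0$ for almost every $y \in \mathbb{E}\setminus[a,b]$.

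Continuity of $f_t$ promotes this almost-everywhere statement to everywhere on $\mathbb{E}\setminus[a,b]$, and together with $f_t \equiv 0$ on $[a,b]$ we obtain $f_t(x) = 0$ for all $x \in \mathbb{E}$, as desired. I do not expect any serious obstacle here: the work has already been done in Lemma \ref{lema3.4} (which gives the right integral representation and the sign of $S_{a,b}$) and in Lemma \ref{lema3.2} (continuity and non-negativity of $f_t$), so this result is essentially an immediate bookkeeping consequence. The only mildly delicate point is making explicit that continuity is being used twice — once to upgrade the vanishing of $f_t$ on the open interval $(a,b)$ to the closed interval, and once to upgrade the almost-everywhere vanishing outside $[a,b]$ to everywhere vanishing — so that the conclusion is genuinely pointwise and not merely in an $L^1$ sense.
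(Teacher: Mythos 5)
Your proposal is correct and follows essentially the same route as the paper: both rest entirely on the identity \eqref{3.0.5} from Lemma \ref{lema3.4}, the strict positivity of $S_{a,b}$ off $[a,b]$, and the non-negativity and continuity of $f_t$ from Lemma \ref{lema3.2}. Your version merely spells out the measure-theoretic and continuity details that the paper leaves implicit.
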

\begin{proof}
Without loss of generality, we may assume that $a<b$. We firstly observe that $f_t(y)=0$ for $y\in(a,b)$. Secondly, $S_{a,b}(y)$ is positive whereas $f_t(y)$ is non-negative, for all $y\in\e\setminus[a,b]$. Finally, by \eqref{3.0.5} we have
$$
\int_\e S_{a,b}(y)f_t(y)dy=0,
$$
which necessarily implies the result.
\end{proof}

\section{Proof of the main results}\label{sec4}

In the previous section we offered a menu of different ingredients that are now carefully selected to serve to the reader the demonstration of theorems \ref{teo2.1} and \ref{teo2.2} and their corollaries. The price paid for having established earlier these technical results is that now the proofs of our theorems are considerably easier, simpler, shorter and aesthetically more elegant than they would be had we opted to present a direct demonstration of each result.

{\bf Proof of Theorem \ref{teo2.1}.}  For each $t>0$, from \eqref{3.0.1} and \eqref{3.0.4} we can rewrite
\begin{equation}\label{4.0.1}
F_{t}(x)=-(u_t+uu_x).
\end{equation}

Assuming ${\bf C2}$, we conclude that for all $(t,x)\in{\cal R}$, we have $F_t(x)=0$. Fixing $t=t^\ast$, for some $t^\ast\in(t_0,t_1)$, then $F_{t^\ast}(x)=0$, $x\in(x_0,x_1)$ and the result is a consequence of Theorem \ref{teo3.2} and Lemma \ref{lema3.2}.
\hfill$\square$

Corollary \ref{cor2.1}'s demonstration is immediate and for this reason is omitted. Let us move to the second corollary.

{\bf Proof of Theorem \ref{teo2.2}.} If $u(t^\ast,x)=\rho(t^\ast,x)=0$, then the function $f_{t^\ast}(x)=0$, $x\in(a,b)$. On the other hand, evaluating \eqref{1.0.3} at $(t^\ast,x)$ we obtain $(1-\p_x^2)u_t(t^\ast,x)=0$, which implies that $u_t(t^\ast,x)=0$, $x\in[a,b]$. From \eqref{4.0.1} we conclude that $F_{t^\ast}(x)=0$, $x\in[a,b]$. The result is then a consequence of Theorem \ref{teo3.2}. \hfill$\square$

{\bf Proof of Corollary \ref{cor2.2}.}  If $(u,\rho)$ were simultaneously compactly supported, we would then be able to find $t^\ast$ and numbers $a,\,b$, with $a<b$, such that $(u,\rho)(t^\ast,x)=0$, for $x\in[a,b]$, which contradicts Theorem \ref{teo2.2}.
\hfill$\square$

\section{Discussion}\label{sec5}

In this paper a unique continuation result for the solutions of the CH2 system is presented, namely, the one given by Theorem \ref{teo2.1}. As a consequence of our results, we were able to investigate and characterise the existence of compactly supported solutions for such system, as shown by Theorem \ref{teo2.2} or Corollary \ref{cor2.2}. 

It was shown by Henry \cite{henry2} that the CH2 system cannot have non-trivial solutions $(u,\rho)$ if $u$ is compactly supported. Therefore, in regard to this particular fact the results proved in the present paper are not new. Its novelty, however, relies upon the tools developed to conclude the same fact, which are fresh: we showed that the non-existence of compactly supported solutions is an immediate consequence of the unique continuation properties reported in Section \ref{sec2} and proved in Section \ref{sec4}.

As far as the author knows, the paper by Linares and Ponce \cite{linares} was the first to establish a unique continuation result for the solutions of the CH equation. Soon after, the author of the present work applied their ideas to the DGH equation \cite{freire-jpa} and went further, connecting them with conserved quantities. These two ingredients not only gave a demonstration for unique continuation of the DGH equation, but also provided a machinery to investigate the non-existence of compactly supported solutions of the DGH equation and, as a particular case, of the CH equation. 

In line with the precedent paragraph, more recently the ideas in \cite{freire-jpa} were also used for exploring persistence properties of the BBM equation \cite{raspa-mo} and the $0-$equation \cite{raspa-ge,raspa-0}.

Since we recover the results in \cite{linares} for the CH equation when $\rho=0$, let us assume this and explain how our results differ from \cite{linares}. If $\rho=0$ and we fix a value of $t$, then the conditions mentioned in the last paragraph would necessarily imply that the function \eqref{3.0.4} vanishes provided that $(t,x)$ belongs to such open set. As a result, the relation \eqref{3.0.5} tells us that $f_t(x)$ vanishes and for the same value of $t$, the function $u(t,x)=0$, $x\in\e$. The point is: what about other values of $t$? Our approach uses the conservation of energy of the solutions to extend such result for other values of $t$. For a fixed $t$, the inequality \eqref{3.0.3}  implies that the energy of the solutions vanishes if and only if $f_t(\cdot)$ vanishes. If this happens, the invariance of the functional \eqref{2.0.1} implies that the solution is zero for each $t$ as long as the solution exists.

It is also worth mentioning that \eqref{1.0.3} has a ``twin'' system, given by
\begin{equation}\label{5.0.1}
\left\{\ba{l}
m_t+2u_xm+um_x-\overline{\rho}\,\overline{\rho}_x=0,\\
\\
\rho_t+(u\overline{\rho})_x=0,
\ea\right.
\end{equation}
which is also physically relevant \cite{holm-ivanov,const-ivanov}. The $-$ sign in the first equation in \eqref{5.0.1} brings considerable difficulty to investigate the persistence properties of \eqref{5.0.1} in the framework used in this paper because the non-local form of the the first equation in \eqref{5.0.1} is
\begin{equation}\label{5.0.2}
u_t+uu_x=-\p_x\Lambda^{-2}\left(u^2+\f{1}{2}u_x^2-\f{1}{2}\overline{\rho}^2\right)
\end{equation}
which makes us unable to use the ideas from lemmas \ref{lema3.2} and \ref{lema3.3} or theorems \ref{teo3.1} and \ref{teo3.2}.

In spite of the problem mentioned above, we note that if $(u,\rho)$ is a solution of \eqref{1.0.3}, defining $\overline{\rho}=i\rho$, where $i^2=-1$, we transform \eqref{5.0.1} into \eqref{1.0.3}, and vice-versa. Noticing that $\overline{\rho}=0$ if and only if $\rho=0$, we can establish similar results to \eqref{5.0.1} from those proved for \eqref{1.0.3}.

Finally, we would like to observe that most of our analysis used the Hamiltonian 
$${\cal H}(t)=\f{1}{2}\|u\|_{H^1(\e)}^2+\f{1}{2}\|\rho\|_{L^2(\e)}^2,$$
which was the Hamiltonian originally reported by Olver and Rosenau, see \cite[Eq. (42)]{olver-pre} (note that there is a typo in the right side of the expression for $\hat{H}_1$. Our variable $\rho$ corresponds to $v$ in that expression). A more physically consistent choice for the Hamiltonian would be (see \cite{const-ivanov,holm-ivanov})
$${\cal H}(t)=\f{1}{2}\|u\|_{H^1(\e)}^2+\f{1}{2}\|\rho-1\|_{L^2(\e)}^2.$$

Let us explain why the latter is more physically consistent than the former. According to Constantin and Ivanov's derivation of \eqref{1.0.3}, the relation between $\rho$, $u$ and the horizontal deviation of the fluid's surface $\eta$ is (see \cite[page 7130]{const-ivanov})
$$
\rho-1=\f{1}{2}\epsilon\eta-\f{1}{8}\epsilon^2(u^2+\eta^2),
$$
where $\epsilon$ is the ratio between the typical amplitude of the wave and the mean level of the water. In the shallow water regime, we have $\epsilon\ll1$. 

As long as the horizontal deviation of the fluid's surface and the horizontal velocity of the fluid $u$ vanish, then $\rho\rightarrow1$. Therefore, if we want to interpret the result of the present paper from a purely physical point of view, we should replace $\rho$ by $\rho-1$ in our results and any conclusion we have for $\rho$ should be physically concerned with $\rho-1$, like the result for local well posedness for \eqref{1.0.3} reported in \cite{const-ivanov}, which assumed that $\rho-1\in H^2(\R)$ in place of $\rho$, as we did throughout the paper. 

Last, but not least, the demonstrations of the results reported in the present work are essentially geometric, as mentioned in the Introduction, and strongly based on physical arguments, as pointed out throughout the work and emphasised in sections \ref{sec2} and \ref{sec3}.

\section{Conclusion}\label{sec6}

In this paper we proved unique continuation results for the system \eqref{1.0.3}. The ideas introduced here can also be applied to the CH (and other non-local shallow water models), and they not only provide new venues in the subject {\it per se}, but also extend some recent results in the area. As a consequence of this new framework we were able to give a new approach for investigating compactly supported solutions for the system \eqref{1.0.3} in both periodic and non-periodic cases.

\section*{Acknowledgements}

CNPq is also thanked for financial support, through grant nº 404912/2016-8.

\end{document}